\date{\today}
\def\nbh{neighborhood }
\def\End{{\rm End}}
\def\ann{{\rm ann}}
\def\deg{\text{deg}\,}
\def\w{\wedge}
\def\dbar{\bar\partial}
\def\C{{\mathbb C}}
\def\w{{\wedge}}
\def\M{{\mathcal M}}
\def\S{{\mathcal S}}
\def\CH{\mathcal{CH}}
\def\Hom{{\rm Hom\, }}
\def\codim{{\rm codim\,}}
\def\Ok{{\mathcal O}}
\def\Re{{\rm Re\,  }}
\def\ann{{\rm ann\,}}
\def\PM{{\mathcal{PM}}}
\def\Cu{{\mathcal C}}
\def\Homs{{\mathcal Hom\, }}
\def\J{{\mathcal J}}
\def\I{{\mathcal I}}
\def\be{\begin{equation}}
\def\ee{\end{equation}}
\newtheorem{thm}{Theorem}[section]
\newtheorem{lma}[thm]{Lemma}
\newtheorem{cor}[thm]{Corollary}
\newtheorem{prop}[thm]{Proposition}
\theoremstyle{definition}
\theoremstyle{remark}
\newtheorem{preremark}{Remark}
\newtheorem{preex}{Example}
\newenvironment{remark}{\begin{preremark}}{\qed\end{preremark}}
\newenvironment{ex}{\begin{preex}}{\qed\end{preex}}
\numberwithin{equation}{section}
\title[]{A residue criterion for strong holomorphicity}
\begin{document}

\date{\today}

\author{Mats Andersson}

\address{Department of Mathematics\\Chalmers University of Technology and the University of 
G\"oteborg\\S-412 96 G\"OTEBORG\\SWEDEN}

\email{matsa@math.chalmers.se}

\subjclass{32A27}

\thanks{The author was
  partially supported by the Swedish Natural Science
  Research Council}

\begin{abstract}
We give  a local criterion in terms of a residue current for strong holomorphicity
of a meromorphic function on an arbitrary pure-dimensional
analytic variety. This
generalizes a result by A.\ Tsikh for the case of a reduced
complete intersection. 
\end{abstract}

%%

%%%%%%%%%%

\maketitle

\section{Introduction}

Let $Z$ be an analytic variety in a \nbh of the closed unit ball  in $\C^n$,
and let $\I_Z$ be the sheaf of holomorphic functions that vanish on $Z$.  Then 
$\Ok_Z=\Ok/\I_Z$ is the sheaf of (strongly) holomorphic functions on $Z$. 
A meromorphic function on $Z$ is a section of the sheaf  $\M_Z$, where
$\M_{Z,x}$ is the ring of quotients  $g/h$, where $g,h\in\Ok_{Z,x}$  and $h$
is a nonzerodivisor. Thus  locally a meromorphic function $\phi$
is   (represented by)   $g/h$ where $g,h$ are holomorphic in the ambient space 
and $h$ is generically non-vanishing on $Z$, and 
$g'/h'$ is another representation of $\phi$ if and only if $gh'=g'h$ 
on $Z$.

If  $Z$ is given by a complete intersection, 
i.e., $Z=\{F_1=\cdots =F_p=0\}$ and $\codim Z=p$,   we  have a well-defined 
$\dbar$-closed $(0,p)$-current
$$
\mu^F=\dbar\frac{1}{F_p}\w\ldots\w\dbar\frac{1}{F_1},
$$
the Coleff-Herrera product, \cite{CH},   with support on $Z$.
The following
criterion was proved by A.\ Tsikh, \cite{Ts};  see also \cite{HP}:
\smallskip

{\it 
Assume that the Jacobian
$dF_1\w\ldots\w dF_p$ is  non-vanishing on $Z_{reg}$.
A meromorphic function $\phi$ on $Z$ is (strongly) holomorphic on $Z$ if and only if the
current 
$
\phi\mu^F
$
is $\dbar$-closed.}
\smallskip

The assumption on the Jacobian implies (and is in fact 
equivalent to) that the annihilator of $\mu^F$ is
precisely $\I_Z$.
The product $\phi\mu^F$ can be defined as the principal value
\begin{equation}\label{ep}
\lim_{\epsilon\to 0}\chi(|h|/\epsilon) (g/h)\mu^F,
\end{equation}
where $g/h$ is a (local) representation of $\phi$ and $\chi$ is
(a possibly smooth approximand of) the characteristic function for 
the interval $[1,\infty)$, see Section~\ref{mult}.  
For  further reference let us sketch a proof of  Tsikh's theorem:   
If $\phi$ is strongly holomorphic, then it is represented
by a function $\Phi$  that is  holomorphic in the ambient space, and since
$\mu^F$ is $\dbar$-closed it follows that $\phi\mu^F$ is.
Conversely,  assume that $\phi=g/h$ where $g,h$ are
holomorphic in the ambient space (and necessarily) $h$ is generically non-vanishing 
on $Z_{reg}$. 
Then formally at least, the assumption implies that
$$
g\dbar\frac{1}{h}\w\dbar\frac{1}{F_p}\w\ldots\w\dbar\frac{1}{F_1}=0,
$$
and since also $h,F_1,\ldots, F_p$ form  a complete intersection it follows from the
duality theorem,  \cite{DS} and \cite{P}, that $g$ is in the ideal
generated by $h, F_1,\ldots, F_p$, i.e., $g=\alpha h+\alpha_1F_1+\cdots +\alpha_pF_p$.
Thus  $\phi$ is represented by $\alpha\in\Ok$ and  so  $\phi\in\Ok_Z$.

\begin{remark}
One should remark here that it is {\it not} possible
to use the Lelong current $[Z]$; in fact,  the meromorphic functions $\phi$ such that
$\phi[Z]$ are  $\dbar$-closed,  form the wider class
$\omega^0_Z$  introduced  in \cite{Bar} and studied further in \cite{HP}.
\end{remark}

In this paper we generalize Tsikh's  result in two ways.   We consider
an arbitrary  variety $Z$ of pure codimension $p$, and we consider also
the non-reduced case, i.e., instead of $\I_Z$ we have  an arbitrary  
pure-dimensional coherent ideal sheaf $\I$ with zero variety $Z$.
To formulate our results we first have to discuss
an appropriate generalization from \cite{AW1}
of the   Coleff-Herrera product above.

In a \nbh $X$ of the closed unit ball
there is a free resolution
\begin{equation}\label{acomplex}
0\to \Ok(E_N)\stackrel{f_N}{\longrightarrow}\ldots
\stackrel{f_3}{\longrightarrow} \Ok(E_2)\stackrel{f_2}{\longrightarrow}
\Ok(E_1)\stackrel{f_1}{\longrightarrow}\Ok(E_0)
\end{equation}
of  the sheaf $\Ok/\I$. %% 
Here $\Ok(E_k)$ is the free sheaf
associated to the trivial vector bundle $E_k$ over $X$,
and $E_0\simeq \C$ so that $\Ok(E_0)\simeq\Ok$. 
In \cite{AW1} we defined, given Hermitian metrics
on $E_k$,  a residue current $R=R_p+R_{p+1}+\cdots $  with support on $Z$, 
where $R_k$ is a $(0,k)$-current that takes values in
$E_k\simeq\Hom(E_0,E_k)$, such that a holomorphic function 
$\phi$ is in $\I$ if and only if  $\phi R=0$.  
For simplicity we think that we have some fixed global frames for
$E_k$ and choose the trivial metrics that they induce. In this way we can talk about
{\it the} residue current associated with \eqref{acomplex}.

If $\I$ is Cohen-Macaulay, i.e.,
each stalk $\I_x$ is a Cohen-Macaulay ideal in $\Ok_x$
we can choose \eqref{acomplex} such that  $N=p$,  and then $R=R_p$ is 
$\dbar$-closed. In general, 
$f_{k+1}R_{k+1}-\dbar R_k=0$ for each $k$ which can be written
simply as $\nabla R=0$ if   $\nabla=f-\dbar$ and $f=\oplus f_k$.

The assumption that $\I$ has pure dimension $p$ means that 
in each local ring $\Ok_x$  all the associated primes
have codimension $p$. 
As in the reduced case we have $\Ok_Z=\Ok/\I$. 
The sheaf of meromorphic functions is defined in precisely
the same way as in the reduced case.
Thus, if   $\Phi$ and $\Phi'$ are meromorphic in the ambient space then
they define the same meromorphic $\phi$ on $Z$ if and only if
$\Phi-\Phi'$ belongs to $\I$ generically on $Z$.
In Section~\ref{mult} we give a reasonable definition of $\phi R$ for 
$\phi\in\M_Z$.   Here is our basic result.

\begin{thm}\label{thmett} 
Suppose that $Z\sim \I$ has pure codimension $p$ and let $R$ be the residue current 
associated to a resolution of $\Ok/\I$. Then a meromorphic function $\phi$ on
$Z$ is (strongly) holomorphic if and only if
\begin{equation}\label{nabvil}
\nabla (\phi R)=0.
\end{equation}
\end{thm}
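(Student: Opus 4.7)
The plan is to follow Tsikh's blueprint, with the Andersson--Wulcan current $R$ replacing the Coleff--Herrera product and the duality theorem for pure-dimensional ideals from \cite{AW1} replacing the complete-intersection duality. The implication $\phi\in\Ok_Z\Rightarrow \nabla(\phi R)=0$ is immediate: pick a holomorphic representative $\Phi$, so $\phi R=\Phi R$, and since $\Phi$ is holomorphic and $\nabla R=0$ one gets $\nabla(\Phi R)=\Phi\nabla R=0$.

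For the converse I work locally and fix a representation $\phi=g/h$ with $g,h\in\Ok$ and $h$ generically non-vanishing on each irreducible component of $Z$; by purity of $\I$, $h$ is then a non-zerodivisor on $\Ok/\I$. My target is to produce $\alpha\in\Ok$ with $g-\alpha h\in\I$, since this forces $\phi=\alpha$ as a section of $\Ok_Z$. To convert the hypothesis into a usable identity I apply $\nabla$ to the regularization $T_\epsilon:=\chi(|h|/\epsilon)(g/h)R$ from \eqref{ep}. By Leibniz,
\[
\nabla T_\epsilon=\chi(|h|/\epsilon)(g/h)\nabla R-\dbar\chi(|h|/\epsilon)\w(g/h)R,
\]
because the would-be extra contribution $\chi(|h|/\epsilon)g\dbar(1/h)\w R$ vanishes ($\chi(|h|/\epsilon)\equiv 0$ near $\{h=0\}=\supp\dbar(1/h)$). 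Using $\nabla R=0$ this reduces to $\nabla T_\epsilon=-\dbar\chi(|h|/\epsilon)\w(g/h)R$, and passing to the limit with the principal-value machinery of Section~\ref{mult} yields $\nabla(\phi R)=-gR^h$, where $R^h:=\dbar(1/h)\w R$. Hence the hypothesis is equivalent to $gR^h=0$.

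The remaining step is a duality statement for $R^h$. Since $h$ is a non-zerodivisor on $\Ok/\I$, tensoring the resolution \eqref{acomplex} with the Koszul complex of $h$ is acyclic and provides a free resolution of $\Ok/(\I+(h))$; its associated Andersson--Wulcan current should be precisely $R^h$, and the annihilator principle from \cite{AW1} then forces $g\in\I+(h)$. Writing $g=\alpha h+J$ with $\alpha\in\Ok$ and $J\in\I$, we obtain $\phi=\alpha$ in $\M_Z$, i.e., strong holomorphicity. The main technical hurdle is this final step: one must verify rigorously that the principal-value current $R^h$ coincides with the Andersson--Wulcan current of the Koszul-extended resolution, and that the annihilator principle is applicable even though $\I+(h)$ need not itself be pure-dimensional. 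The pure-codimension hypothesis on $\I$ and the careful definition of $\phi R$ in Section~\ref{mult} are exactly what make this identification work.
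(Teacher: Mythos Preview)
Your outline matches the paper's proof exactly: the forward direction is immediate, and for the converse you reduce $\nabla(\phi R)=0$ to $g\,\dbar(1/h)\wedge R=0$ (this is the paper's \eqref{hepp}) and then invoke the fact that the annihilator of $\dbar(1/h)\wedge R$ is $(h)+\I$. That last fact is precisely Theorem~\ref{korre}, which the paper isolates and proves separately as a special case of the product-of-resolutions result Theorem~\ref{prod}.

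Where your sketch diverges from the paper is in how you propose to close the ``technical hurdle''. You suggest identifying $\dbar(1/h)\wedge R$ with the Andersson--Wulcan current of the tensor-product resolution and then citing the annihilator principle from \cite{AW1}. The paper does \emph{not} do this: that identification is only established in the Cohen--Macaulay, minimal-length case (the last clause of Theorem~\ref{prod}). Instead, the paper constructs an auxiliary $U$ with $\nabla_{\End}U=I-R^h\wedge R$ and shows, via a commutation lemma $U^h\wedge R^g=R^g\wedge U^h$ forced by the codimension condition \eqref{dimvillkor}, that the higher components $(R^h\wedge R)^\ell$ vanish for $\ell\ge 1$. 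Proposition~\ref{22} then delivers both the exactness of the product complex and the annihilator statement simultaneously, without ever comparing $R^h\wedge R$ to the canonical current of that complex. Your worry about $\I+(h)$ failing to be pure is therefore moot: Proposition~\ref{22} requires no purity hypothesis. So your plan is correct in spirit, but the route you indicate for the key lemma would not go through as stated; the paper's detour via Proposition~\ref{22} is what makes it work.
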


If $\I$ is Cohen-Macaulay and $N=p$ in \eqref{acomplex}, then $R=R_p$ and
so \eqref{nabvil} means that $\dbar (\phi R)=0$.
\smallskip

The reduced case  of course corresponds to $\I=\I_Z$.

\begin{remark}
If $f_1=(F_1,\ldots, F_p)$ is a complete intersection,
one can choose \eqref{acomplex} as the Koszul complex, and  then the 
residue current is precisely the Coleff-Herrera product $\mu^F$, see, e.g.,
\cite{A12} Corollary~3.2.
If  $\I=\I_Z$  we thus get   back Tsikh's theorem.
\end{remark}

Let $\I$ be any ideal sheaf of codimension $p$ 
and let \eqref{acomplex} be a resolution
of $\Ok/\I$. Let $Z_k$ be the analytic set where  $f_k$ does not have
have optimal rank. These sets $Z_k$  are independent of the choice of resolution, 
$\subset Z_{p+2}\subset Z_{p+1}\subset Z_{sing}\subset Z_p=\cdots =Z_1=Z$, 
where $Z$ is  the zero set of $\I$, and $\codim Z_k\ge k$ for all $k$.
Moreover, $\I$  is pure if and only if
$\codim Z_k\ge k+1$ for all $k>p$,  and
$\I$ is Cohen-Macaulay if and only if $Z_k=\emptyset$ for $k>p$.
All these facts are well-known and can be found in, e.g., 
\cite{Eis1} Ch.~20.

\smallskip
For each  meromorphic function $\phi$ on $Z\sim\I$ there is a
smallest analytic subvariety $P_\phi$, the pole set,
outside which $\phi$ is strongly holomorphic.
As an application of Theorem~\ref{thmett} we get

\begin{thm}\label{thmtva}
Assume that $Z$  has pure codimension $p$. If   $\phi$ is meromorphic and 
\begin{equation}\label{kruka}
\codim(P_\phi\cap Z_k)\ge k+2,\quad  k\ge p,
\end{equation}
then $\phi$ is (strongly) holomorphic.
\end{thm}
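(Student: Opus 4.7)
By Theorem~\ref{thmett} it suffices to verify that $\nabla(\phi R)=0$. The plan is to show that each bidegree component of $\nabla(\phi R)$ is a pseudomeromorphic $(0,k+1)$-current whose support has codimension strictly larger than $k+1$, and then invoke the standard dimension principle for pseudomeromorphic currents to conclude that the component vanishes.

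First I would argue that $\nabla(\phi R)$ has support in $P_\phi$. Indeed, outside $P_\phi$ the meromorphic function $\phi$ is strongly holomorphic, so $\phi R$ may be represented there by $\Phi R$ with $\Phi\in\Ok$ (in the ambient sense, locally). Since $\nabla R=0$, the Leibniz rule gives $\nabla(\phi R)=\dbar\phi\wedge R=0$ off $P_\phi$. (Here I would rely on the definition of $\phi R$ discussed in Section~\ref{mult}, which is local and principal-value in nature, so it commutes with restriction to the open set where $\phi$ is holomorphic.) Therefore $\supp\nabla(\phi R)\subset P_\phi$.

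Next, I would use that $R_k$, and hence $\phi R_k$, has support contained in $Z_k$. This is a fundamental property of the Andersson-Wulcan residue currents that follows from the fact that $R_k$ is constructed as an almost semi-meromorphic object smooth outside the set where $f_k$ fails to have optimal rank. Since the $(0,k+1)$-component of $\nabla(\phi R)$ equals $f_{k+1}(\phi R_{k+1})-\dbar(\phi R_k)$, and since $Z_{k+1}\subset Z_k$, this component has support contained in $Z_k$. Combining with the previous step, its support lies in $P_\phi\cap Z_k$, which by hypothesis \eqref{kruka} has codimension $\geq k+2$ whenever $k\geq p$.

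Finally I would apply the dimension principle: a pseudomeromorphic $(0,q)$-current supported on an analytic variety of codimension $>q$ must vanish. Applied with $q=k+1$ (so codimension $\geq k+2>k+1$), this forces the $(0,k+1)$-component of $\nabla(\phi R)$ to be zero for every $k\geq p$; since these are the only nonzero bidegrees in $\nabla(\phi R)$, we conclude $\nabla(\phi R)=0$ and the theorem follows from Theorem~\ref{thmett}. The main obstacle is verifying the pseudomeromorphic character and the support control of $\phi R_k$ after the principal-value multiplication introduced in Section~\ref{mult}; once this is in place, the rest is a codimension count. For $k=p$ the hypothesis $\codim(P_\phi\cap Z_p)\geq p+2$ is exactly $\codim(P_\phi)\geq p+2$ on $Z$, which is the analogue of Tsikh's Hartogs-type condition and is the only condition that appears in the Cohen-Macaulay case where $Z_k=\emptyset$ for $k>p$.
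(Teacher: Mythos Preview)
Your overall strategy is the paper's---show that each component $\dbar\phi\w R_k$ of $-\nabla(\phi R)$ vanishes by a codimension count and Proposition~\ref{hyp}, then invoke Theorem~\ref{thmett}---but there is a genuine gap in the support argument. You assert that ``$R_k$, and hence $\phi R_k$, has support contained in $Z_k$'' and conclude that the $(0,k+1)$-component of $\nabla(\phi R)$ is supported in $P_\phi\cap Z_k$. This is false for $k>p$: in general $R_k$ has support on all of $Z$, not on $Z_k$. Indeed, outside $Z_{k}$ one has $R_{k}=\alpha_{k} R_{k-1}$ with $\alpha_{k}=\dbar\sigma_{k}$ smooth (see the proof of Lemma~\ref{anita}), so the support of $R_k$ coincides with that of $R_{k-1}$ off $Z_k$. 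The ``fundamental property'' you invoke does not hold: the form $u_k$ is smooth outside $Z_1=Z$, not outside $Z_k$, and $R_k$ is a residue current supported on $Z$.

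The paper closes this gap by induction on $k$. For $k=p$ one has $Z_p=Z$, so $\dbar\phi\w R_p$ is a $(0,p+1)$-current supported in $P_\phi\cap Z$, which by \eqref{kruka} has codimension $\ge p+2$; Proposition~\ref{hyp} forces $\dbar\phi\w R_p=0$. Assuming now $\dbar\phi\w R_k=0$, the relation $R_{k+1}=\alpha_{k+1}R_k$ outside $Z_{k+1}$ gives $\dbar\phi\w R_{k+1}=\alpha_{k+1}(\dbar\phi\w R_k)=0$ there, so $\dbar\phi\w R_{k+1}$ is supported in $P_\phi\cap Z_{k+1}$ and the dimension principle applies again. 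Thus it is the inductive step---via $R_{k+1}=\alpha_{k+1}R_k$ off $Z_{k+1}$---that pushes the support into $Z_{k+1}$; there is no a~priori support restriction on $R_{k+1}$ itself.
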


Assume now that $Z$ is reduced.
Recall that a function is called {\it weakly holomorphic} on $Z$ if it is holomorphic on
$Z_{reg}$ and locally bounded at $Z_{sing}$. It is well-known that each weakly holomorphic
function is meromorphic, see, e.g., \cite{Dem}.
If each germ of a weakly holomorphic function at $x\in Z$
is strongly holomorphic, then necessarily $Z_x$ is irreducible and 
$x$ is said to be a normal point.  
%%It
%%is also well-known  that if $x$ is a normal point, then the germ
%%$Z_{sing,x}$ must have at least codimension $2$ in $Z_x$,
%%i.e., at least $2+\ell$ on $X$. 
If $\phi$ is weakly holomorphic, then clearly
$P_\phi$ is contained in $Z_{sing}$.
From Theorem~\ref{thmtva} we therefore immediately get

\begin{cor}\label{thmtre}
Assume that $Z$ is reduced with pure codimension $p$ and let $\I_x$ be the 
corresponding local ideal at $x\in Z$. If 
\begin{equation}\label{kruka1}
\codim Z_{sing,x}\ge 2+p,
\end{equation}
and %%if in addition
\begin{equation}\label{kruka2}
\codim Z_{k,x}\ge 2+k,\quad  k>p,
\end{equation}
then $x$ is a normal point.
\end{cor}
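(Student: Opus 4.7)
The plan is to deduce the statement directly from Theorem~\ref{thmtva} applied to an arbitrary weakly holomorphic germ at $x$. I would take any weakly holomorphic $\phi$ on $Z$ near $x$. As noted just above the corollary, $\phi$ is then automatically meromorphic with pole set $P_\phi \subset Z_{sing}$ in a neighborhood of $x$, so the only task is to verify the codimension condition \eqref{kruka} of Theorem~\ref{thmtva} at $x$.

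For $k=p$ one has $Z_p=Z$, and hence $P_\phi \cap Z_p \subset Z_{sing}$ locally at $x$; combined with hypothesis \eqref{kruka1} this gives $\codim (P_\phi\cap Z_p)_x \ge \codim Z_{sing,x} \ge p+2$. For $k>p$ the trivial inclusion $P_\phi \cap Z_k \subset Z_k$, together with \eqref{kruka2}, yields $\codim (P_\phi\cap Z_k)_x \ge \codim Z_{k,x} \ge k+2$. Thus \eqref{kruka} holds at $x$, and Theorem~\ref{thmtva} concludes that $\phi$ is strongly holomorphic at $x$.

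Since $\phi$ was an arbitrary weakly holomorphic germ at $x$, every such germ is strongly holomorphic, which is by definition the statement that $x$ is a normal point.

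There is no real obstacle in the corollary itself---once Theorem~\ref{thmtva} is granted, the argument is essentially codimension bookkeeping, which is exactly why the author calls it immediate. The one nontrivial ingredient borrowed from outside is the classical fact, quoted from \cite{Dem}, that weakly holomorphic functions on a reduced variety are automatically meromorphic with pole set contained in $Z_{sing}$; all the substance of the argument has already been absorbed into Theorem~\ref{thmtva}, which itself ultimately rests on Theorem~\ref{thmett}.
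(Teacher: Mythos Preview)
Your argument is correct and is exactly the reasoning the paper has in mind: it says only that the corollary follows ``immediately'' from Theorem~\ref{thmtva} once one uses that a weakly holomorphic $\phi$ is meromorphic with $P_\phi\subset Z_{sing}$, and you have simply spelled out the two cases $k=p$ and $k>p$ of the codimension check \eqref{kruka}.
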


Conversely,  the conditions \eqref{kruka1} and \eqref{kruka2} are fulfilled if
$x$ is a normal point. 
In fact, these conditions are equivalent to Serre's criterion
(conditions $R1$ and $S2$)
for the ring $\Ok_{Z,x}$ to be normal, see, e.g., \cite{Eis1} p 255 and 462.
(The condition  \eqref{kruka1} is precisely  $R1$ %%(regular in codimension $1$)
and by an argument similar to the proof of  Corollary~20.14 in \cite{Eis1} it follows that
\eqref{kruka2} is equivalent to the condition $S2$.)
The normality of  $\Ok_{Z,x}$ is  
equivalent to that it is equal to its integral closure in $\M_{Z,x}$, 
which in turn is equivalent
to that $x$ is a normal point, see also \cite{AK}.

\begin{remark} One can check that the sets $Z^0=Z_{sing}$ and
$Z^\ell=Z_{p+\ell}$ for $\ell>0$ are independent of the embedding and thus
intrinsic analytic subset of the analytic space $Z$. In this notation the Serre condition
says that  $\codim Z^\ell\ge 2+\ell$ for $\ell\ge 0$.
\end{remark}

\begin{ex}  If $\I_x$ is a Cohen-Macaulay ideal, the $Z_k=\emptyset$ for $k>p$ and hence
\eqref{kruka2} is trivially fulfilled. If $Z_{sing}$ is just a point $x$, then
\eqref{kruka2} is fulfilled  if $Z_k$ avoids $x$ for each $k>n-2$.
This means that  $\Ok_{Z,x}=\Ok_x/\I_x$ has depth
at least $2$.   
\end{ex}

We also obtain a new proof of the following result due to Malgrange \cite{Mal}
and Spallek \cite{Spall}. One says that a function $\phi$ on $Z$ is in
$C^k(Z)$ if it is (locally) the restriction to $Z$ of a $C^k$-function in the
ambient space.

\begin{cor}\label{ms}
Assume that $Z$ has pure codimension and is reduced.
There is a natural number $m$ such that 
if $\phi\in C^m(Z)$ is  holomorphic on $Z_{reg}$ then $\phi$ is
strongly holomorphic on $Z$.
\end{cor}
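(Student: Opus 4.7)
The plan is to verify the hypothesis of Theorem~\ref{thmett}. Since $Z$ is reduced and of pure codimension, a function $\phi\in C^m(Z)$ that is holomorphic on the dense open set $Z_{reg}$ is in particular weakly holomorphic, and hence meromorphic on $Z$ via a universal denominator. It thus suffices to show that $\nabla(\phi R)=0$ provided $m$ is chosen large enough, depending only on the resolution \eqref{acomplex}.

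Let $\Phi$ be a $C^m$ extension of $\phi$ to a neighborhood of $Z$ in $\C^n$. Because $\phi$ is holomorphic on $Z_{reg}$, a Whitney-type argument permits us to modify $\Phi$ so that $\dbar\Phi$ vanishes along $Z_{reg}$ to an order $\ell$ that tends to infinity with $m$. Provided $m$ exceeds the order of $R$ as a current, the ambient product $\Phi R$ is well defined, and a regularization as in \eqref{ep} identifies it with the meromorphic product $\phi R$ of Section~\ref{mult}. Since $\nabla R=0$, the Leibniz rule gives
\begin{equation*}
\nabla(\Phi R)=\Phi\,\nabla R-\dbar\Phi\w R=-\dbar\Phi\w R.
\end{equation*}

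It therefore remains to prove that $\dbar\Phi\w R=0$ once $\ell$ is sufficiently large; this is the main obstacle. The proof exploits the pseudomeromorphic structure of $R$ from \cite{AW1}: after pullback by a Hironaka desingularization of $Z$, the current $R$ locally decomposes into elementary residue currents of the form $\dbar(1/s^a)\w\alpha$ with $\alpha$ smooth, and any such current is annihilated by wedging with a smooth $(0,1)$-form whose pullback vanishes to sufficiently high order along the preimage of $Z_{reg}$. The required $\ell$ is controlled by the exponents appearing in this decomposition, which in turn are determined by the resolution \eqref{acomplex}. Combining these ingredients yields $\nabla(\phi R)=0$, and Theorem~\ref{thmett} then provides the desired strong holomorphicity of $\phi$.
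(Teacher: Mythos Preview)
Your overall plan—reduce to showing $\nabla(\phi R)=0$ and then apply Theorem~\ref{thmett}—matches the paper's. The divergence is in how you argue that $\dbar\Phi\w R=0$.

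The paper avoids both the Whitney modification and the Hironaka resolution entirely. It first treats the case $m=\infty$: after identifying the smooth product $\Phi R$ with the meromorphic product $\phi R$ (via the SEP of $R$, Lemma~\ref{anita}), one observes that $\dbar\phi\w R$ has support on $Z_{sing}$ simply because $\phi$ is strongly holomorphic on $Z_{reg}$. Since $\dbar\Phi$ is smooth, the restriction commutes through: $(\dbar\Phi\w R){\bf 1}_{Z_{sing}}=\dbar\Phi\w(R\,{\bf 1}_{Z_{sing}})=0$, again by the SEP. That is the whole argument; the passage to finite $m$ is relegated to a bookkeeping remark.

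Your route via high-order vanishing of $\dbar\Phi$ and elementary residues after resolution is more laborious, and as written it has a gap. The elementary pieces of $R$ upstairs are supported on the \emph{total} transform of $Z$, which over $Z_{sing}$ consists of exceptional divisors that are \emph{not} contained in the preimage of $Z_{reg}$. So vanishing of $\pi^*(\dbar\Phi)$ to high order along $\pi^{-1}(Z_{reg})$ does not by itself annihilate $\dbar(1/s^a)$ when $\{s=0\}$ is exceptional. This is repairable—if $\dbar\Phi$ vanishes to order $\ell$ on $Z_{reg}$ and $\Phi\in C^{\ell}$, continuity of the derivatives forces the same vanishing on all of $Z$, and then $\pi^*(\dbar\Phi)$ vanishes to order at least $\ell$ on every component of $\pi^{-1}(Z)$—but you must say so. Once corrected, your argument is essentially a hands-on verification of the SEP that the paper simply invokes.
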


\smallskip

It is desirable to express the  ideal $\I$ as 
\begin{equation}\label{snitt}
\I=\cap_1^\nu \ann\mu_\ell,
\end{equation}
where $\mu_j$ are so-called Coleff-Herrera currents, $\mu_j\in\CH_Z$, on $Z$.
%%see \cite{JEB4} (or \cite{11}) for the definition.
In fact,  (locally) a Coleff-Herrera current $\mu$ is just  a meromorphic
differential operator acting on the current of integration
$[Z]$ (combined with contractions with   holomorphic vector fields),
see \cite{JEB4} (or \cite{A11}). Therefore 
$\phi\mu=0$  is an elegant 
intrinsic way to express that  certain holomorphic differential
operators applied to  $\phi$ vanish on $Z$.
%%Locally, any $\mu\in\CH_Z$ can be realized as a meromorphic
%%differential operator acting on the current of integration
%%$[Z]$ (combined with contractions with   holomorphic vector fields),
%%see \cite{JEB4} (or \cite{A11}). 
If $\I$ has pure codimension    then, see, e.g.,  (1.6) in \cite{A11},
 $\I$ is equal to the annihilator of the analytic sheaf 
$$
\Homs(\Ok/\I,\CH_Z)=\{\mu\in\CH_Z;\  \I\mu=0\}.
$$
This sheaf  turns out to be coherent, and therefore 
there is a finite family of global sections in a \nbh $X$ of the closed unit
ball such that \eqref{snitt} holds.
One can ask whether there is a criterion for strong holomorphicity
expressed in terms of the $\mu_\ell$. 

\begin{thm}\label{chthm}
Assume that $\I$ has pure codimension  $p$ and  that $\mu_\ell$, $\ell=1,\ldots, N$,
generate $\Homs(\Ok/\I,\CH_Z)$. 
Let $\phi$ be meromorphic  and assume that   
\begin{equation}\label{barrolin}
\codim(P_\phi\cap Z_k)\ge k+2, \quad   k>p.
\end{equation}
Then $\phi$ is holomorphic if and only if  
$\phi\mu_\ell$  are $\dbar$-closed for all $\ell$.
\end{thm}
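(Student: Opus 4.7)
The ``only if'' direction is immediate: if $\phi$ is strongly holomorphic, it is represented by a function $\Phi$ holomorphic in the ambient space, so $\dbar(\phi\mu_\ell) = \dbar(\Phi\mu_\ell) = 0$ since both $\Phi$ and $\mu_\ell$ are $\dbar$-closed. For the converse, the plan is to reduce to Theorem~\ref{thmtva}. The hypothesis \eqref{barrolin} already provides the codimension estimate \eqref{kruka} for $k > p$, and $Z_p = Z$, so it suffices to verify the missing case $k = p$, namely that $\codim_Z P_\phi \ge 2$. Once this is in hand, Theorem~\ref{thmtva} yields strong holomorphicity directly.

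To prove $\codim_Z P_\phi \ge 2$, I would argue by contradiction. Suppose $P_\phi$ contained an irreducible component $V$ of codimension one in $Z$, and pick a generic point $x_0 \in V$. Then I may assume $x_0 \in Z_{reg}$ and that $\I$ is locally $\I_Z$-primary there. In suitable local coordinates the relevant component of $Z$ is $\{z_1 = \cdots = z_p = 0\}$, $V = \{z_1 = \cdots = z_{p+1} = 0\}$, and $\phi = g/z_{p+1}^m$ for some $m \ge 1$ and some $g \in \Ok$ with $g \notin \I + (z_{p+1})$ (otherwise $\phi$ would not genuinely have a pole on $V$). By the local structure theorem for Coleff-Herrera currents annihilated by a primary ideal (see \cite{JEB4}), every element of $\Homs(\Ok/\I, \CH_Z)_{x_0}$ has the form $\mathcal{D}\mu_0$, where $\mu_0 = \dbar(1/z_p) \wedge \cdots \wedge \dbar(1/z_1)$ and $\mathcal{D}$ ranges over a module of holomorphic differential operators in $z_1,\ldots,z_p$ determined by the primary part of $\I$ (reducing to the identity when $\I$ is reduced). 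Since the $\mu_\ell$ generate this sheaf, some $\mu_{\ell_0} = \mathcal{D}_0 \mu_0$ is nontrivial at $x_0$ in the appropriate sense.

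The key computation is then
\[
\dbar(\phi\mu_{\ell_0}) \;=\; \pm\, g\, \dbar\tfrac{1}{z_{p+1}^m} \wedge \mathcal{D}_0\mu_0,
\]
a Coleff-Herrera current supported on the complete intersection $\{z_1 = \cdots = z_{p+1} = 0\}$. The duality theorem of Dickenstein-Sessa and Passare---applied after commuting $\mathcal{D}_0$ past $\dbar(1/z_{p+1}^m)$ and passing to the transposed operator---tells us that this current vanishes only when $g$ lies in a subideal of $\I + (z_{p+1})$, contradicting the choice of $g$. Hence no such $V$ exists, $\codim_Z P_\phi \ge 2$, and Theorem~\ref{thmtva} concludes the proof.

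The main obstacle will be making the non-reduced case precise: identifying the differential-operator module that parameterises $\Homs(\Ok/\I, \CH_Z)_{x_0}$ and verifying that the final duality step really does force $g$ into a proper subideal of $\I + (z_{p+1})$, so that the assumption $g \notin \I + (z_{p+1})$ genuinely produces a contradiction. The reduced case is transparent because $\mathcal{D}_0$ is the identity and the duality applies directly to the complete intersection $(z_1,\ldots,z_p,z_{p+1}^m)$; the non-reduced case requires a bit more care to show that the differential operator does not accidentally kill the obstruction.
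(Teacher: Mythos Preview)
Your strategy differs from the paper's and carries a gap you have not flagged. The paper never tries to establish $\codim_Z P_\phi\ge 2$; instead it invokes the representation from \cite{A11} (Theorem~1.5) that every $\mu\in\Homs(\Ok/\I,\CH_Z)$ has the form $\xi R_p$ with $f_{p+1}^*\xi=0$. Outside $Z_{p+1}$ the sheaf $\I$ is Cohen--Macaulay, a local minimal resolution has length $p$, and so \emph{every} $\xi$ trivially satisfies this constraint; the hypothesis therefore forces $\dbar\phi\wedge\tilde R_p=0$, and since $R_p=\alpha\tilde R_p$ with $\alpha$ smooth (Theorem~4.4 in \cite{AW1}), the current $\dbar\phi\wedge R_p$ is supported on $Z_{p+1}$. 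An induction identical to that in the proof of Theorem~\ref{thmtva}, using only \eqref{barrolin}, then kills $\dbar\phi\wedge R_k$ for all $k$, and Theorem~\ref{thmett} concludes. This route needs no local coordinates on $Z$, makes no reduced/non-reduced distinction, and never examines $P_\phi$ in codimension one.

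The unjustified step in your argument is ``I may assume $x_0\in Z_{reg}$.'' A codimension-one component $V$ of $P_\phi$ can lie entirely in $Z_{sing}$: for $Z=\{xy=0\}\subset\C^3$ and $\phi=y/(x+y)$ the pole set is exactly the singular axis $\{x=y=0\}$. At a generic point of such a $V$ there is no chart with $Z=\{z_1=\cdots=z_p=0\}$, so your explicit Coleff--Herrera computation and the duality appeal are unavailable. What you \emph{do} know there is that $x_0\notin Z_{p+1}$ (since $\codim Z_{p+1}\ge p+2$ by pure-dimensionality), hence $\I_{x_0}$ is Cohen--Macaulay---but turning that into a contradiction is precisely the $\xi R_p$ mechanism above, not a coordinate calculation. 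So patching this case forces you onto the paper's path anyway, and the non-reduced obstacle you do flag also disappears there.
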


If for instance $\I$ is Cohen-Macaulay, then $Z_k$ is empty for $k>p$ so 
\eqref{barrolin} is fulfilled for any meromorphic $\phi$. 
If $h$ is holomorphic and generically non-vanishing on $Z$, then
$\dbar(1/h)\w \mu_\ell$ are  Coleff-Herrera currents whose common
annihilator is precisely the ideal $h+\I$, see Theorem~\ref{prod} below.

\section{Some residue theory}\label{rester}

In \cite{AW2} we introduced the sheaf of  {\it pseudomeromorphic} 
currents $\PM$ in $X$.
It  is  a module over the sheaf of smooth forms,
and closed under $\dbar$.
For any  $T\in\PM$ and variety $V$ there exists a restriction
$T{\bf 1}_V$  that is in $\PM$  and has support on $V$,  and 
$T=T{\bf 1}_V$ if and only if $T$ has support on $V$. 
Moreover, ${\bf 1}_V{\bf V'} T={\bf 1}_{V\cap V'}T$
and $\xi {\bf 1}_V T= {\bf 1}_V(\xi T)$ if $\xi$ is smooth.
%%(see Theorem~3.1 in \cite{AW2}).
If $H$ is a holomorphic tuple such that
$\{H=0\}=V$, then $|H|^{2\lambda}T$ has a current-valued
analytic continuation to $\Re\lambda>-\epsilon$ and 
\begin{equation}\label{rest}
T{\bf 1}_{V}=T-|H|^{2\lambda}T\big|_{\lambda=0}.
\end{equation}
We say that a current $T$ with support on a variety $V$ has SEP
(with respect to $V$) if $T{\bf 1}_W=0$ for each $W\subset V$ with
positive codimension. 
The following result (Corollary~2.4 in \cite{AW2} will be used frequently.

\begin{prop}\label{hyp}
If $\mu\in\PM$ with  bidegree $(*,p)$ has  support on
a variety $V$ of codimension $k>p$ then $\mu=0$. 
\end{prop}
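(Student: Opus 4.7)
The statement is local on $X$, so I would work in a small neighborhood of a point of $V$. My strategy is to combine the local structure theorem for $\PM$ established in \cite{AW2} with the restriction formula \eqref{rest} to reduce to an explicit computation on elementary pseudomeromorphic currents.

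By the structure theorem, locally on $X$ I may write $\mu$ as a finite sum
\[
\mu = \sum_\alpha (\pi_\alpha)_* \tau_\alpha
\]
of pushforwards along proper modifications $\pi_\alpha : \widetilde X_\alpha \to X$, where each $\tau_\alpha$ is an elementary pseudomeromorphic current
\[
\tau_\alpha = \beta_\alpha \wedge \dbar\frac{1}{s_1^{a_1}} \wedge \cdots \wedge \dbar\frac{1}{s_{r}^{a_{r}}} \cdot \frac{1}{s_{r+1}^{a_{r+1}} \cdots s_m^{a_m}}
\]
in local coordinates $(s_1, \ldots, s_m, \ldots)$ on $\widetilde X_\alpha$, with $\beta_\alpha$ a smooth form. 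Since pushforward preserves bidegree and $\mu$ has antiholomorphic degree $p$, each $\tau_\alpha$ has at most $p$ residue factors; in particular $\supp \tau_\alpha = \{s_1 = \cdots = s_r = 0\}$ has codimension $r \leq p$ in $\widetilde X_\alpha$.

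Next I would apply $\mathbf{1}_V$: since $\supp \mu \subset V$, we have $\mu = \mathbf{1}_V \mu$, and using the compatibility of $\mathbf{1}_V$ with pushforward (another standard feature of the pseudomeromorphic calculus of \cite{AW2}) we get
\[
\mu = \sum_\alpha (\pi_\alpha)_* \bigl( \mathbf{1}_{\pi_\alpha^{-1} V}\, \tau_\alpha \bigr).
\]
It suffices to verify that each $\mathbf{1}_{\pi_\alpha^{-1} V}\, \tau_\alpha$ vanishes. After principalizing the ideal of $V$ along $\pi_\alpha$, the defining tuple of $\pi_\alpha^{-1} V$ becomes monomial in the coordinates $s_j$, and the restriction is computed from \eqref{rest} by the residue identity $|g|^{2\lambda} \tau_\alpha\big|_{\lambda=0} = \tau_\alpha$ whenever $g$ is holomorphic and not identically zero on $\supp \tau_\alpha$. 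This identity follows from the SEP of elementary currents, since the difference is supported in $\{g = 0\} \cap \supp \tau_\alpha$, of positive codimension inside $\supp \tau_\alpha$. Because $\codim V = k > p \geq r$, at least one component of the defining tuple of $V$ is not identically zero on $\supp \tau_\alpha$, and this forces $\mathbf{1}_{\pi_\alpha^{-1} V}\, \tau_\alpha = 0$, hence $\mu = 0$.

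The step I expect to be the main obstacle is controlling the \emph{degenerate} elementary pieces---those for which the modification $\pi_\alpha$ happens to collapse all of $\{s_1 = \cdots = s_r = 0\}$ into $V$, so that every component of the defining tuple of $V$ pulls back to a function vanishing identically on $\supp \tau_\alpha$. In that case the simple residue identity above does not apply directly, and one must either refine $\pi_\alpha$ (via further principalization or monomialization) so that such degeneracies are avoided, or check that such pieces already push forward to zero for bidegree reasons. The delicate bookkeeping required here is essentially what the pseudomeromorphic structure theory of \cite{AW2} provides.
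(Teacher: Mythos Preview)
The paper does not supply its own proof; the proposition is simply quoted as Corollary~2.4 of \cite{AW2}. Your sketch reproduces the strategy of that reference: decompose into pushforwards of elementary currents, pass ${\bf 1}_V$ through the pushforward, and kill each piece using the SEP of elementary currents.

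You have correctly isolated the one nontrivial point. The codimension inequality $k>p\ge r$ lives on $X$, not on $\widetilde X_\alpha$, so it does \emph{not} directly prevent $\supp\tau_\alpha\subset\pi_\alpha^{-1}(V)$; exceptional divisors can drop the codimension of the preimage below $r$. You rightly flag this as the degenerate case, and your option~(b) is the way out: when $\pi_\alpha$ collapses $\supp\tau_\alpha=\{s_1=\cdots=s_r=0\}$ into a set of strictly smaller dimension, the pullbacks $\pi_\alpha^*d\bar z_j$ along the collapsed fibre directions vanish on $\supp\tau_\alpha$ and hence carry antiholomorphic factors $\bar s_i$, and $\bar s_i\,\dbar(1/s_i^{a_i})=0$ forces $(\pi_\alpha)_*\tau_\alpha=0$. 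That computation (after a further monomialisation so that the fibre directions are coordinate directions) is the content of the cited corollary in \cite{AW2}, so your deferral to that reference is exactly what the present paper does as well.
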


Let $Z$ be a variety of pure codimension $p$. The sheaf of $\dbar$-closed
$\PM$ currents of bidegree $(0,p)$ with support on $Z$ 
coincides with the so-called 
sheaf of Coleff-Herrera currents, $\CH_Z$;
see Proposition~2.5 in \cite{AW2}.

\smallskip

We have to  recall the construction of a residue current associated with
a complex of locally free sheaves  in \cite{AW1}. Let
\begin{equation}\label{hcomplex}
0\to E_N\stackrel{f_N}{\longrightarrow}\ldots
\stackrel{f_3}{\longrightarrow} E_2\stackrel{f_2}{\longrightarrow}
E_1\stackrel{f_1}{\longrightarrow}E_0\to 0
\end{equation}
be a generically exact complex of Hermitian vector bundles over $X$,
where $E_0\simeq\C$ for simplicity,
let
\begin{equation}\label{complex}
0\to \Ok(E_N)\stackrel{f_N}{\longrightarrow}\ldots
\stackrel{f_1}{\longrightarrow}\Ok(E_0)
\end{equation}
be the corresponding complex of locally free sheaves,
and let $\I$ be the ideal sheaf $f_1\Ok(E_1)\subset\Ok$.
 Assume that
\eqref{hcomplex} is pointwise exact outside the variety $Z$, and 
over $X\setminus Z$  let $\sigma_k\colon E_{k-1}\to E_k$ be the minimal
inverses of $f_k$. Then
$
f\sigma+\sigma f=I,
$
where $I$ is the identity on $E=\oplus E_k$, $f=\oplus f_k$ and $\sigma=\oplus\sigma_k$.
The bundle $E$ has a natural superbundle structure
$E=E^+\oplus E^-$, where $E^+=\oplus E_{2k}$ and $E^-=\oplus E_{2k+1}$,
and $f$ and $\sigma$ are odd mappings with respect to this structure,
see, e.g., \cite{AW1} for more details.

The operator   $\nabla=f-\dbar$ acts as an odd mapping on $\Cu^{0,\bullet}(X, E)$,
the space of $(0,*)$-currents with values in $E$,  and extends to
an odd  mapping $\nabla_\End$ on $\Cu^{0,\bullet}(X, \End E)$,
and $\nabla_{\End}^2=0$.
%%The operator
%%$\nabla$ is in fact the $(0,1)$-part of the superconnection 
%%introduced by Quillen, \cite{Qu}.
If 
$$
u=\sigma+(\dbar\sigma)\sigma+(\dbar\sigma)^2\sigma+\cdots,
$$
then  $\nabla_\End u=I$ in $X\setminus Z$. 
One can define a canonical current extension $U$ of $u$
across $Z$ as the analytic continuation  to $\lambda=0$ of
$U^\lambda=|F|^{2\lambda}u$, where $F$ is a holomorphic tuple that
vanishes on $Z$; e.g., $F=f_1$ will do if \eqref{complex} is a resolution. 
From \cite{AW2} we know that $U$ is in $\PM$.
For further reference we notice that ${\bf 1}_VU=0$ for any $V$ with positive codimension.
In fact, since $U$ is smooth outside $Z$,  ${\bf 1}_VU$ must vanish there,
and thus it has support on $Z$. However, from the definition
of $U$ it follows that ${\bf 1}_ZU=0$. Therefore,
${\bf 1}_VU={\bf 1}_Z{\bf 1}_V U={\bf 1}_V{\bf 1}_Z U=0$.
Now 
$$
\nabla_{\End} U^{\lambda}=I-R^\lambda,
$$
where 
\begin{equation}\label{rlambda}
R^\lambda=(1-|F|^{2\lambda})I+\dbar|F|^{2\lambda}\w u.
\end{equation}
Then the  current 
$$
R=R^\lambda|_{\lambda=0}
$$
is in $\PM$, has  support on $Z$, and 
\begin{equation}\label{res}
\nabla_\End U=I-R.
\end{equation}
More precisely, 
$$
R=\sum_{\ell\ge 0}  R^\ell=\sum_{\ell, k\ge 0} R^\ell_k, 
$$
where $R^\ell_k$ is a $\PM$-current of bidegree $(0,k-\ell)$
that takes values in $\Hom(E_\ell,E_k)$.

\smallskip

As before, let $Z_k$ be the set where $f_k$ does not have optimal rank.
By the Buchsbaum-Eisenbud theorem, see \cite{Eis1} Ch.~20,
\eqref{complex} is a resolution of $\Ok/\I$ if and only if
$\codim Z_k\ge k$ for all $k$.
We also recall from \cite{AW1} that
if   \eqref{complex}  is a resolution, then $R^\ell=0$ for all $\ell\ge 1$.
In view of Proposition~\ref{hyp} then  $R=R^0=R_p+R_{p+1} +\cdots$. 
Since $E_0=\C$ we can consider $R=R^0$ as taking values in $E$ rather than $\Hom(E_0,E)$,
and since $\nabla_{\End}R=0$ thus  $\nabla R=0$.

\smallskip

Below we will consider analogues of $R$ and $U$ 
obtained in a different way. The following proposition is proved 
precisely as Proposition~2.2 in  \cite{AW1}.

\begin{prop}\label{22}
Consider the  generically exact complex \eqref{hcomplex} 
and let $U$ and $R$ be any currents such that  \eqref{res} holds. 
If $R^1=0$ then  $\ann R=\I$.
If $R^\ell=0$ for all $\ell\ge 1$ then the 
associated sheaf  complex \eqref{complex}  is exact,
i.e., a resolution of $\Ok/\I$.
\end{prop}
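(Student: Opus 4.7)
The plan is to extract both conclusions from the single identity $\nabla_{\End}U = I - R$, read block by block as a matrix equation in the currents $R^\ell_k \in \Hom(E_\ell, E_k)$. Applying $\nabla_{\End}$ to both sides, together with $\nabla_{\End}^2 = 0$ and $\nabla_{\End}I = 0$, produces the key identity $\nabla_{\End}R = 0$; on the $(\ell,k)$-block it reads schematically
\[
f_{k+1}R^\ell_{k+1} - R^{\ell-1}_{k}f_{\ell} - \dbar R^\ell_k = 0,
\]
up to the usual superstructure signs. These relations are essentially the only tool the proof uses.

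For the first claim, the inclusion $\I \subset \ann R$ comes from the $\ell = 1$ row: under $R^1 = 0$, every term collapses except $R^0_k \circ f_1$, so $R^0 \circ f_1 = 0$ as a $\Hom(E_1,E)$-valued current. Writing $\phi = f_1\alpha \in \I$ with $\alpha \in \Ok(E_1)$ and using $E_0 \simeq \C$, the scalar product $\phi R^0$ is identified with $(R^0 \circ f_1)\alpha$ and hence vanishes. For the reverse inclusion, given $\phi$ with $\phi R = 0$, the identity $\nabla_{\End}(\phi U) = \phi I$ at the $(0,0)$-bidegree $\Hom(E_0,E_0)$ block yields $\phi = f_1(\phi U^0_1)$, exhibiting $\phi$ as $f_1$ of a pseudomeromorphic current $V := \phi U^0_1$. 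To upgrade $V$ to a holomorphic section I would exploit the higher-bidegree components of the same identity; these successively shift $\dbar V$ into the image of $f_2$, then $f_3$, and so on, while $\phi R = 0$ kills the residue obstruction at every stage. At the end of the chain, the pure-dimensional duality between $\Ok/\I$ and $\CH_Z$ from \cite{AW1} provides a genuinely holomorphic $\alpha$ with $\phi = f_1\alpha$, placing $\phi \in \I$.

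For the second claim, assume $R^\ell = 0$ for every $\ell \ge 1$. The plan is to verify exactness of $\Ok(E_\bullet)$ at each spot, which by the Buchsbaum-Eisenbud criterion is equivalent to $\codim Z_k \ge k$ for all $k$. Given a holomorphic section $\alpha$ of $E_\ell$ with $f_\ell\alpha = 0$, the candidate $\beta := U^\ell_{\ell+1}\alpha$ solves $f_{\ell+1}\beta = \alpha$ at the level of currents, while its $\dbar$-defect, via the same block relations, is controlled by terms of the form $R^\ell_\bullet\alpha$, which vanish by assumption. A finite iteration along the complex, using that each further $\dbar$-defect lies in the image of the next $f$, then replaces $\beta$ by a holomorphic section, as required.

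The main obstacle is the reverse inclusion $\ann R \subset \I$ in the first claim. Passing from a current-level equality $\phi = f_1 V$ to honest ideal membership is genuinely delicate: $V$ is only pseudomeromorphic, $\dbar V$ need not vanish, and the rank degeneracies of $f_1$ along each $Z_k$ must all be absorbed in the argument. This is the step where the full machinery of \cite{AW1} --- the $\PM$-framework, the residue obstructions encoded in the higher blocks $R^\ell$, and the duality with $\CH_Z$ --- must be brought to bear in combination, and it is the only substantive step in the proof.
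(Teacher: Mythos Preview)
The paper does not actually prove this proposition; it just says it is ``proved precisely as Proposition~2.2 in \cite{AW1}.'' So the comparison is with the standard argument from \cite{AW1}, and your setup --- reading $\nabla_{\End}U=I-R$ and $\nabla_{\End}R=0$ block by block --- is exactly right. Your treatment of $\I\subset\ann R$ is correct, and your plan for the second claim (exactness) is essentially the right one.

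The gap is in the reverse inclusion $\ann R\subset\I$. You correctly obtain $\phi = f_1(\phi U^0_1)$ at the current level, but then you reach for ``the pure-dimensional duality between $\Ok/\I$ and $\CH_Z$'' to promote $\phi U^0_1$ to a holomorphic section. Two problems. First, the proposition makes \emph{no} pure-dimension hypothesis on $\I$ (indeed, it is later applied in the proof of Theorem~\ref{prod} to the tensor-product complex before any such property is known), so that duality is simply unavailable. Second, and more to the point, no such machinery is needed: the argument is an elementary $\dbar$-homotopy. From $\phi R=0$ the relation $\nabla_{\End}(\phi U)=\phi I$ gives, at successive levels,
\[
\phi = f_1 v_1,\qquad \dbar v_1 = f_2 v_2,\qquad \dbar v_2 = f_3 v_3,\ \ldots,\qquad \dbar v_{N-1}=f_N v_N,\qquad \dbar v_N=0,
\]
with $v_k=\phi U^0_k$. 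Working locally on a Stein set, solve $\dbar w_N=v_N$, replace $v_{N-1}$ by $v_{N-1}-f_Nw_N$ (now $\dbar$-closed), and iterate downward. At the bottom you obtain a \emph{holomorphic} $\alpha=v_1-f_2w_1$ with $f_1\alpha=f_1v_1=\phi$, so $\phi\in\I$. The same descent handles the exactness statement at level $\ell\ge 1$, exactly as you outlined. So the step you flag as ``the only substantive step'' requiring ``the full machinery of \cite{AW1}'' is in fact the most elementary part; none of the $\PM$-structure, the $\CH_Z$-duality, or the residue obstructions enter.
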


\section{Multiplication by  meromorphic functions}\label{mult}

For any pseudomeromorphic current $T$ and holomorphic function $h$,
the product  $(1/h)T$  is defined in \cite{AW2} (Proposition~2.1)
as the value at $\lambda=0$ of
$|h|^{2\lambda}T$. It is again a pseudomeromorphic current and it is clear that
$\alpha (1/h) T=(1/h) \alpha T$ if $\alpha$ is smooth. However, 
in general it is {\it not} true that $f(1/fg)T=(1/g)T$. 
One can verify, cf., the proof if Proposition~5.1 in  \cite{A12},  that  
$(1/h)T$  is equal to the limit of  $\chi(|h|/\epsilon)T/h$ when $\epsilon\to 0$,
cf.,  \eqref{ep} above. Moreover, if  we define $\dbar(1/h)\w T$ as the value at $\lambda=0$
of $\dbar|h|^{2\lambda}\w(1/h)T$, then the Leibniz rule 
$\dbar[(1/h)T]=\dbar(1/h)\w T+(1/h)\dbar T$ holds.

\begin{lma}\label{anita}
Suppose that $Z\sim \I$ has pure codimension $p$ and let 
$R$ be the residue current associated with a resolution \eqref{acomplex}.
If $h$ is generically non-vanishing on $Z$, then $(1/h)R$
has the  SEP on $Z$. 
\end{lma}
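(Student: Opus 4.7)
My plan is to reduce the SEP of $(1/h)R$ to the SEP of $R$ itself, via a double regularization and analytic continuation, and then invoke SEP of $R$ as a structural consequence of the pure-codimension hypothesis.

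First, fix a subvariety $W\subset Z$ of positive codimension in $Z$ and a holomorphic tuple $H$ with $\{H=0\}=W$. By \eqref{rest},
\[
\mathbf{1}_W\bigl[(1/h)R\bigr] \;=\; (1/h)R \;-\; |H|^{2\mu}(1/h)R\bigl|_{\mu=0}.
\]
Unfolding $(1/h)R = |h|^{2\lambda}R\bigl|_{\lambda=0}$, one is led to the two-parameter current $|H|^{2\mu}|h|^{2\lambda}R$. By standard Hironaka-type arguments it admits a joint current-valued meromorphic extension to a bidisc around $(0,0)$, so the two iterated values at the origin coincide. Moreover, for $\Re\lambda\gg 0$ the factor $|h|^{2\lambda}$ is smooth, and the PM commutation rule $\mathbf{1}_W(\xi S) = \xi\mathbf{1}_W S$ for smooth $\xi$ yields $\mathbf{1}_W(|h|^{2\lambda}R) = |h|^{2\lambda}\mathbf{1}_W R$. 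Continuing analytically in $\lambda$ down to $\lambda=0$ reduces the desired identity to $\mathbf{1}_W R = 0$, i.e.\ the SEP of $R$ on $Z$.

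Next, the SEP of $R$ is verified component-wise via $R=R_p+R_{p+1}+\cdots$. The bidegree-$p$ piece is immediate from Proposition~\ref{hyp}: $\mathbf{1}_W R_p$ is a pseudomeromorphic current of bidegree $(0,p)$ supported in $W$ with $\codim W\ge p+1>p$, hence zero. For the higher components $R_k$, $k>p$, Proposition~\ref{hyp} applied to $R_k$ alone is insufficient, since a priori $\supp R_k$ need only lie in $Z$, of codimension $p$. Here the pure-codimension hypothesis enters essentially: combined with the annihilator identity $\ann R = \I$ from Proposition~\ref{22}, pureness of $\I$ forces $R$ to have SEP on $Z$, so each $\mathbf{1}_W R_k = 0$ and hence $\mathbf{1}_W R = 0$.

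The principal obstacle is precisely the SEP of $R$ itself in the pure, possibly non-Cohen--Macaulay, setting; once that structural input is in hand, the reduction to $(1/h)R$ above is formal. The assumption that $h$ is generically non-vanishing on $Z$ is used so that $\{h=0\}\cap Z$ has positive codimension in $Z$, ensuring that $|h|^{2\lambda}R\bigl|_{\lambda=0}$ represents the meromorphic product $(1/h)R$ in the sense of Section~\ref{mult}, rather than carrying additional mass on an entire irreducible component of $Z$ where $h$ vanishes identically.
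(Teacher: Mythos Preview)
Your proposal has a genuine gap at the key step. You reduce the SEP of $(1/h)R$ to the SEP of $R$, and then for the components $R_k$ with $k>p$ you simply assert that ``pureness of $\I$ forces $R$ to have SEP on $Z$'', invoking $\ann R=\I$ from Proposition~\ref{22}. But that is not an argument: knowing the annihilator of $R$ does not by itself tell you that $\mathbf{1}_W R_k=0$ for $W\subset Z$ of positive codimension. This is precisely the nontrivial content of the lemma (take $h=1$), and you have left it unproved.

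The paper's proof supplies exactly the missing mechanism. Outside $Z_{k+1}$ one has the smooth factorization $R_{k+1}=\alpha_{k+1}R_k$ with $\alpha_{k+1}=\dbar\sigma_{k+1}$, so smoothness gives $((1/h)R_{k+1})\mathbf{1}_V=\alpha_{k+1}((1/h)R_k)\mathbf{1}_V$ there. Inductively this forces $((1/h)R_{k+1})\mathbf{1}_V$ to have support in $Z_{k+1}$; the pure-codimension hypothesis enters through $\codim Z_{k+1}\ge k+2$, and Proposition~\ref{hyp} then kills it. This inductive use of the $\alpha_k$ and the bounds $\codim Z_k\ge k+1$ is the substance of the proof, and your write-up does not contain it.

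A secondary issue: your reduction step relies on interchanging the two regularizations $|H|^{2\mu}$ and $|h|^{2\lambda}$ via a ``standard Hironaka-type'' joint meromorphic extension. Even granting existence of such an extension, you need holomorphy (not mere meromorphy) at the origin to conclude that the iterated values agree, and the paper explicitly warns that operations of this type on $\PM$-currents do \emph{not} commute in general (e.g.\ $f(1/fg)T\neq(1/g)T$). In the paper's approach no such interchange is needed: the argument is run directly for $(1/h)R$ component by component, which is both simpler and avoids this delicate point.
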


\begin{proof}[Proof of Lemma~\ref{anita}]
Assume that $V\subset Z$ has positive codimension. 
Then $((1/h) R_p){\bf 1}_V=0$ in view of
Proposition~\ref{hyp}. Outside the variety $Z_{p+1}$
we have that $R_{p+1}=\alpha_{p+1}R_p$ where $\alpha_{p+1}=\dbar\sigma_{p+1}$ is smooth,
and hence 
\begin{multline*}
((1/h)R_{p+1}){\bf 1}_V= ((1/h)\alpha_{p+1}R_{p}){\bf 1}_V= \\
(\alpha_{p+1}(1/h)R_{p}){\bf 1}_V=\alpha_{p+1}
((1/h)R_{p}){\bf 1}_V=0.
\end{multline*}
It follows that $((1/h)R_{p+1}){\bf 1}_V$ has support on $Z_{p+1}$ which has codimension
$\ge p+2$, and hence it vanishes by virtue of Proposition~\ref{hyp}.
Now $R_{p+2}=\alpha_{p+2}R_{p+1}$ outside $Z_{p+2}$ that has codimension $\ge p+3$,
and so $(g(1/h)R_{p+2}){\bf 1}_V=0$ by a similar argument. Continuing in this way
the  lemma follows. %% by induction.
\end{proof}

Given a meromorphic function $\phi$ on $Z$ we can define 
$\phi R$ as $g(1/h)R$ if $g/h$ represents $\phi$. Since
$(1/h)R$ has the SEP also $g(1/h)R$ has.  Since the difference of two
representations of $\phi$ lies in $\I$ outside some $V\subset Z$
of positive codimension and $\I R=0$, it follows from the SEP that
$\phi R$ is well-defined. Moreover, if $\psi\in\Ok_Z$, 
it follows that 
$$
\psi(\phi R)=(\psi\phi) R =\phi (\psi R).
$$
Since $\phi R$ is a well-defined, we also have
a well-defined current $\dbar\phi\w R$, and by the Leibniz rule,
\begin{equation}\label{hepp}
\dbar \phi\w R=-\nabla(\phi R)=g\dbar\frac{1}{h}\w R.
\end{equation}
The proof of Theorem~\ref{thmett}   follows  the outline of the proof of
Tsikh's theorem in the introduction,  and the following result is crucial.

\begin{thm}\label{korre}
Assume that $\I$ has pure codimension and let $R$ be the residue current
associated with a resolution.
If  $h$ is generically non-vanishing
on $Z$, then  the annihilator of 
$$
\dbar\frac{1}{h}\w R.
$$
is precisely  $h+\I$.
\end{thm}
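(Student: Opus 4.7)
Purity of $\I$, combined with the bound $\codim Z_k \geq k+1$ for $k>p$ and the dimension principle (Proposition~\ref{hyp}), forces $R_k = 0$ for all $k > p$, so $R = R_p \in \CH_Z$ is $\dbar$-closed. The theorem reduces to showing $\ann(\dbar(1/h)\wedge R_p) = h + \I$.

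For the inclusion $h + \I \subseteq \ann(\dbar(1/h)\wedge R)$, Proposition~\ref{22} gives $\ann R = \I$, and since multiplication by a holomorphic function commutes with the operations $T\mapsto (1/h)T$ and $T\mapsto \dbar(1/h)\wedge T$, the ideal $\I$ annihilates $\dbar(1/h)\wedge R$. For the generator $h$, $\dbar$-closedness of $R_p$ together with the Leibniz rule yields $\dbar(1/h)\wedge R_p = \dbar((1/h) R_p)$. Applying Lemma~\ref{anita} with $h \equiv 1$, $R_p$ itself has SEP on $Z$; since $h$ is generically non-vanishing on $Z$, $h\cdot(1/h) R_p = R_p$, and therefore $h \cdot \dbar(1/h)\wedge R_p = \dbar R_p = 0$.

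For the reverse inclusion $\ann(\dbar(1/h)\wedge R) \subseteq h + \I$, I would build a resolution of $\Ok/(h+\I)$ whose residue current is (essentially) $\dbar(1/h)\wedge R$ and then apply Proposition~\ref{22} once more. Purity of $\I$ forces every associated prime of $\I$ to be a minimal prime, hence an irreducible component of $Z$; as $h$ is generically non-vanishing on $Z$, it lies in none of them and is therefore a non-zerodivisor on $\Ok/\I$. Form the mapping cone of the chain map $h\cdot\colon E_\bullet \to E_\bullet$ by setting $\tilde E_k = E_k \oplus E_{k-1}$ with $\tilde f_k(a,b) = (f_k a + hb,\, -f_{k-1}b)$; the long exact sequence of mapping cone homology then shows that $\tilde E_\bullet$ is a resolution of $\Ok/(h+\I)$. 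Equipping it with direct-sum Hermitian metrics, Proposition~\ref{22} gives $\ann \tilde R = h + \I$ for the associated residue current $\tilde R$.

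It remains to identify $\tilde R$. A block-by-block computation of the minimal inverse $\tilde\sigma$ (whose blocks combine $\sigma$ off $Z$ with $1/h$ off $\{h=0\}$) and of the formula $\tilde R = \dbar|\tilde F|^{2\lambda}\wedge \tilde u|_{\lambda = 0}$ shows that the only nonvanishing contribution to $\tilde R$ is the $E_p$-valued block of $\tilde R_{p+1}$, and this block is precisely $\dbar(1/h)\wedge R_p$: the $E_{p+1}$-valued block of $\tilde R_{p+1}$ reduces to the original $R_{p+1}$, which vanishes by purity, while all higher components $\tilde R_k$ vanish by the dimension principle since $Z \cap \{h=0\}$ has codimension $p+1$. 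Hence $\ann\tilde R = \ann(\dbar(1/h)\wedge R_p) = h + \I$, completing the proof. The principal technical obstacle is exactly this residue computation on the mapping cone: writing out $\tilde\sigma$ and $\tilde u$ in the direct-sum decomposition and tracking the cancellations that reduce the $E_{p+1}$-valued block of $\tilde R_{p+1}$ to the original $R_{p+1}$, with no residual admixture that could contribute extra annihilators.
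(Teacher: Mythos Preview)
Your opening claim is false: purity of $\I$ does \emph{not} force $R_k=0$ for $k>p$. Proposition~\ref{hyp} would require $R_k$ to be supported on a set of codimension $>k$, but $R_k$ is supported on $Z$, which has codimension~$p$; the bound $\codim Z_k\ge k+1$ is irrelevant because $R_k$ is \emph{not} supported on $Z_k$. Indeed, outside $Z_{k}$ one has $R_k=(\dbar\sigma_k)R_{k-1}$ with $\dbar\sigma_k$ smooth (cf.\ the proof of Lemma~\ref{anita}), so $R_k$ is generically a nonzero smooth multiple of $R_{k-1}$. Only in the Cohen--Macaulay case with a length-$p$ resolution is $R=R_p$; in general $R=R_p+R_{p+1}+\cdots$ and $\nabla R=0$ replaces $\dbar R_p=0$. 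This error propagates through your mapping-cone computation: you dismiss the $E_{p+1}$-block of $\tilde R_{p+1}$ as ``the original $R_{p+1}$, which vanishes by purity,'' and you kill $\tilde R_k$ for $k>p+1$ by the dimension principle ``since $Z\cap\{h=0\}$ has codimension $p+1$'' --- but that inequality points the wrong way: a $(0,k)$-current with $k>p+1$ supported in codimension $p+1$ is \emph{not} forced to vanish by Proposition~\ref{hyp}.

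Your mapping cone is, up to signs, exactly the tensor product of \eqref{acomplex} with the Koszul complex of $h$, so the architecture coincides with the paper's. What you have skipped is precisely the substantive step the paper isolates: one must show that the product current $R^h\w R$ (which here is $\dbar(1/h)\w R$, with \emph{all} components $R_k$ present) satisfies $(R^h\w R)^m=0$ for $m\ge1$, so that Proposition~\ref{22} applies. The paper does this not by a direct block computation of $\tilde\sigma$ and $\tilde u$, but by first proving the commutation $U^h\w R=R\w U^h$ via an inductive support-and-bidegree argument using condition~\eqref{dimvillkor}; applying $\nabla_{\End}$ then yields $R^h\w R=R\w R^h$, from which the required vanishing follows. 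Note also that the paper only identifies the product current with the \emph{standard} residue $\tilde R$ of the product complex in the Cohen--Macaulay case; for general pure $\I$ it is $\dbar(1/h)\w R$ itself whose annihilator is computed.
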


Theorem~\ref{korre} is a special case of a  more general result
for product complexes, Theorem~\ref{prod}, 
%%in Section~\ref{tre} 
that we obtain without too much extra effort.

\begin{remark}
Let $\phi$ be holomorphic in $Z\setminus V$, 
where $V\subset Z$ has positive codimension and contains $Z_{sing}$. 
If $\phi$ is meromorphic on $Z$, then
we have seen that $\phi R$ has a natural current extension from
$X\setminus V$ across $V$. Also the converse holds. 
In fact, one can always find a holomorphic form $\alpha$ with values
in $\Hom(E_p,E_0)$ such that $R_p\cdot\alpha=[Z]$, see \cite{A11}
Example~1.
Therefore, if $\phi R$ has an extension across $V$ also 
$\phi[Z]$ has, and it then follows from \cite{HP} that $\phi$ is meromorphic.
\end{remark}

\section{Tensor products of resolutions}\label{tre}

Assume  that $\Ok(E^g_k), g_k$ and $\Ok(E^h_\ell), h_\ell$ are resolutions of
$\Ok/\I$ and $\Ok/\J$, respectively. 
We can  define a   complex  \eqref{complex}, where
\begin{equation}\label{sven}
E_k=\bigoplus_{i+j=k} E^g_i\otimes E^h_j,
\end{equation}
$f=g+h$, or more formally,
$f=g\otimes I_{E^h}+ I_{E^g}\otimes h$,
such that
$$
f(\xi\otimes\eta)=g\xi\otimes\eta +(-1)^{\deg\xi}\xi\otimes h\eta.
$$
Notice that $E_0=E_0^g\otimes E_0^h=\C$ and that
$f_1\Ok(E_1)=\I+\J$.
One extends \eqref{sven} to current-valued sections $\xi$ and $\eta$ and
$\deg\xi$ then means  total degree. It is  natural to write
$\xi\w \eta$ rather than $\xi\otimes\eta$, and of course
we can define $\eta\w\xi$  as $(-1)^{\deg \xi \deg\eta}\xi\w\eta$.
%%We also have the operator $\
Notice that 
\begin{equation}\label{rregel}
\nabla (\xi\otimes\eta)=\nabla^g\xi\otimes\eta
+(-1)^{deg \xi}\xi\otimes \nabla^h\eta.
\end{equation}
Let  $u^g$ and $u^h$ be the corresponding
$\Hom(E^g)$-valued and $\Hom(E^h)$-valued 
forms, cf., Section~\ref{rester}.
Then $u=u^h\w u^g$ is a $\Hom(E)$-valued form
outside $Z^g\cup Z^h$. 
Following the proof of  Proposition~2.1 in \cite{AW2} we can define
$\Hom(E)$-valued pseudomeromorphic currents
$$
R^h\w  R^g=R^{h,\lambda}\w  R^g |_{\lambda=0},
\quad 
R^g\w  R^h=R^{g,\lambda}\w  R^h |_{\lambda=0}.
$$

\begin{remark}
It is important here that $R^{h,\lambda}=\dbar|H|^{2\lambda}\w u^h$ with $H=h_1$. If we use
a tuple $H$ that vanish on a larger set than $Z^h$, the result may  be affected.
It is also important to
notice that even if a certain component  $(R^h)^\ell_k$ vanishes, it might very well 
happen that $(R^h)^\ell_k\w R^g$ is non-vanishing. In particular, notice that
$
(R^h)^\ell_\ell\w R^g = {\bf 1}_{Z^h}I_{E^h_\ell}\w R^g,
$
cf., \eqref{rlambda} and \eqref{rest}, which is non-vanishing
if $Z^h\supset Z^g$.
\end{remark}

We can now state our main result of  this section.

\begin{thm}\label{prod}
Assume that  $\I$ and $\J$ are ideal sheaves such that
\begin{equation}\label{dimvillkor}
\codim (Z^\I_k\cap Z^\J_\ell)\ge k+\ell, \quad k,\ell\ge 1. 
\end{equation}
Then
\begin{equation}\label{stake}
R^h\w R^g=R^g\w R^h
\end{equation}
and  the annihilator of $R^h\w R^g$ is equal to $\I+\J$.

In case both sheaves are Cohen-Macaulay and both  resolutions have minimal lengths, 
$R^h\w R^g$  coincides with the current obtained from the tensor product
of the resolutions. %%% for $\I$ and $\J$, respectively.
\end{thm}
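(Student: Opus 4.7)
\emph{Overall strategy.} My approach is to identify $R^h \wedge R^g$ with the residue current $R_{tot}$ of the tensor product complex \eqref{sven}. The degeneracy loci of that complex satisfy $Z^{tot}_k \subseteq \bigcup_{i+j=k}(Z^\I_i \cap Z^\J_j)$, so \eqref{dimvillkor} gives $\codim Z^{tot}_k \ge k$. By the Buchsbaum-Eisenbud criterion, \eqref{sven} is then a resolution of $\Ok/(\I + \J)$, and by \cite{AW1} the associated residue current $R_{tot}$ has only $R^0$ components. Proposition~\ref{22} applied to this resolution gives $\ann R_{tot} = \I + \J$, reducing the annihilator claim to proving the identification $R_{tot} = R^h \wedge R^g$.

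\emph{Identification and commutativity.} To prove $R_{tot} = R^h \wedge R^g$, I would introduce the two-parameter regularization
\[
R^{h,\lambda} \wedge R^{g,\mu},
\]
whose joint analytic continuation to a neighborhood of $(\lambda, \mu) = (0, 0)$ exists by a Hironaka resolution of singularities of $|G|^2|H|^2$. I would then compare its value at the origin with the single-parameter continuation $|f_1|^{2\lambda}$ (where $f_1 = g_1 + h_1$ and $|f_1|^2 = |G|^2 + |H|^2$) that defines $R_{tot}$, by expanding both in terms of the $u$-currents adapted to the bigraded decomposition of $E^g \otimes E^h$. The discrepancy between the two continuations reduces to pseudomeromorphic contributions supported on varieties $Z^\I_i \cap Z^\J_j$ whose codimension exceeds the form bidegree of the piece in question; such contributions vanish by Proposition~\ref{hyp} combined with \eqref{dimvillkor}. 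The same argument with $g$ and $h$ interchanged gives $R_{tot} = R^g \wedge R^h$, so both the commutativity \eqref{stake} and the annihilator statement follow.

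\emph{Cohen-Macaulay case and main obstacle.} If both $\I$ and $\J$ are Cohen-Macaulay and the resolutions $(\Ok(E^g_k),g_k)$ and $(\Ok(E^h_\ell),h_\ell)$ have minimal lengths $p^\I$ and $p^\J$, then \eqref{dimvillkor} with $k = p^\I,\ \ell = p^\J$ forces $\codim(\I + \J) = p^\I + p^\J$, so $\I + \J$ is itself Cohen-Macaulay. The tensor product complex then has length $p^\I + p^\J$, which is minimal, so by construction $R_{tot}$ is the residue current obtained from the tensor product of the two resolutions, giving the last claim. The main obstacle is the identification step: one must control the joint two-parameter continuation and match it with the single-parameter $|f_1|^{2\lambda}$, tracking the super-sign bookkeeping coming from the bigraded structure on $E^g \otimes E^h$ and the super-anticommutation of $g$ and $h$ after tensoring with the respective identities, and then apply Proposition~\ref{hyp} in a way that genuinely eliminates every term that could distinguish $R_{tot}$ from $R^h \wedge R^g$. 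Once this identification is in place, all three assertions of the theorem follow immediately.
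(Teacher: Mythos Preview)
Your strategy differs from the paper's in one essential point, and that point contains a genuine gap.

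The paper does \emph{not} identify $R^h\wedge R^g$ with the residue current $R^f$ of the tensor product complex in general. Instead it constructs an ad hoc pair
\[
U=I^h\wedge U^g+U^h\wedge R^g,\qquad R=R^h\wedge R^g,
\]
checks directly that $\nabla_{\End}U=I-R$, and then feeds this into Proposition~\ref{22}. The commutativity \eqref{stake} is obtained \emph{first}, by proving the auxiliary identity $U^h\wedge R^g=R^g\wedge U^h$ via an induction on bidegree together with Proposition~\ref{hyp}, and then applying $\nabla_{\End}$ to both sides. Commutativity is exactly what forces $R^m=0$ for $m\ge 1$, which is the hypothesis Proposition~\ref{22} needs. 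The identification $R^f=R^h\wedge R^g$ is established only in the Cohen--Macaulay case, and there by an entirely different mechanism: both currents are $\dbar$-closed of bidegree $(0,p)$, hence Coleff--Herrera, and a uniqueness lemma from \cite{A11} applied to $\nabla_{\End}(U-U^f)=R^f-R$ forces equality.

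Your identification step, by contrast, asks for $R_{tot}=R^h\wedge R^g$ in full generality, and the sketched argument does not deliver it. The current $R_{tot}$ is built from $u^f=\sigma^f+(\dbar\sigma^f)\sigma^f+\cdots$, where $\sigma^f$ is the minimal inverse of $f=g+h$ with respect to the metric on $E$; this is \emph{not} $u^h\wedge u^g$, and there is no reason the two should differ only by terms that Proposition~\ref{hyp} kills. Expanding ``in terms of the $u$-currents adapted to the bigraded decomposition'' does not make the discrepancy disappear, because the minimal inverse of a sum of maps on a direct sum is not the tensor product of the minimal inverses. So the two-parameter versus one-parameter comparison you propose is comparing regularizations of genuinely different smooth forms outside $Z^g\cup Z^h$, and the codimension bounds \eqref{dimvillkor} alone do not bridge that. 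Without this identification your argument gives neither \eqref{stake} nor the annihilator statement; both rest on it. The paper's route avoids the problem entirely by never needing $R^f$ except in the final Cohen--Macaulay clause.
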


\begin{proof}[Proof of Theorem~\ref{korre}]
Let  $\I$ be   the sheaf associated to  $Z$ and let $\J=(h)$. 
Then  $0\to \Ok(E^h_1)\to\Ok(E_0^h)$ is a resolution of
$\Ok/\J$  if  $E_1^h\simeq E_0^h\simeq\C$
and the mapping is multiplication by $h$.
Thus $Z^h=Z^h_1=\{h=0\} $ and $Z^h_\ell=\emptyset$ for $\ell>1$.
Since $Z$ has pure codimension, $\codim Z_k\ge k+1$ for all $k$.
Thus  $\codim Z_k\cap Z^h_\ell\ge k+\ell$.  
Since  $R^h\w R=\dbar(1/h)\w R$, 
Theorem~\ref{korre} follows from Theorem~\ref{prod}.
\end{proof}

\begin{remark}
Let $\I=(g_1)$ and $\J=(h_1)$ be complete intersections, and choose the
Koszul complexes as resolutions. Then, see \cite{AW1},  $R^g$ and $R^h$ are the
Bochner-Martinelli type residues introduced in \cite{PTY}.
Moreover, the tensor product of these resolutions is the Koszul
complex generated by $(g_1,h_1)$,  and so the last statement in the  theorem
means that this product coincides with the Bochner-Martinelli  residue associated with
the ideal $(g_1,h_1)$.  This fact is proved already in \cite{W1}.
\end{remark}

\begin{remark} Theorem~\ref{prod}  extends in a natural way 
to any finite number of ideal sheaves.
\end{remark}

Analogously we can define currents
$$
U^h\w R^g=U^{h,\lambda}\w R^g |_{\lambda=0},
\quad
R^g\w U^h=R^{g,\lambda}\w U^h|_{\lambda=0},
$$
etc. %%Of course  $U^g\w R^h$, $R^g\w  R^h$, and 
%%$R^g\w U^h$ are defined analogously.
From \eqref{rregel} we get that
\begin{equation}\label{ratta}
\nabla_{\End} (U^h\w  R^g)= I^h\w R^g-R^h\w  R^g.
\end{equation}
In fact, $\nabla_{\End} (U^{h,\lambda}\w R^g)=(I^h-R^{\lambda,h})\w R^g$
since $\nabla_{\End}^g R^g=0$ and so
\eqref{ratta} follows. 
In the same way 
\begin{equation}\label{ratta2}
\nabla_{\End}(R^g\w U^h)=R^g\w I^h-R^g\w R^h.
\end{equation}
If we define 
$$
U=I^h\w U^g+U^h\w R^g, \quad   R=R^h\w R^g, \quad I=I_E,
$$
therefore 
\begin{equation}\label{malla}
\nabla_{\End} U=I-R.
\end{equation}

\begin{lma}
If the hypothesis  in Theorem~\ref{prod} holds,
 we have that
\begin{equation}\label{malla2}
U^h\w R^g=R^g\w U^h.
\end{equation}
\end{lma}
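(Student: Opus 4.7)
Set $T := U^h \wedge R^g - R^g \wedge U^h$; the goal is to prove $T = 0$. The $E^h_k \otimes E^g_m$-component $T^{k,m}$ has bidegree $(0,k+m-1)$, and I aim to force it to vanish through Proposition~\ref{hyp}. Outside $Z^g$ the factor $R^g$ vanishes, so $T^{k,m}$ is supported in $\supp R^g_m \subset Z^g_m$. Outside $Z^h$ the form $u^h$ is smooth and $U^h$ coincides with it; smooth forms supercommute with pseudomeromorphic currents (the sign being absorbed into the tensor-product convention), so $U^h \wedge R^g = R^g \wedge U^h$ off $Z^h$. Combining, $\supp T^{k,m} \subset Z^g_m \cap Z^h$.

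I would then show $\nabla_\End T = 0$. Indeed, (\ref{ratta}) and (\ref{ratta2}) give
\[
\nabla_\End T \;=\; (I^h \wedge R^g - R^g \wedge I^h) \;-\; (R^h \wedge R^g - R^g \wedge R^h),
\]
where the first parenthesis vanishes because $I^h$ is even in the super-grading, and the second vanishes by (\ref{stake}) in Theorem~\ref{prod}. The resulting coupling of components (schematically, $f T^{k,m} = \dbar T^{k-1,m}$ under $\nabla=f-\dbar$) allows the crude support inclusion to be sharpened inductively on $k$, using the SEP of pseudomeromorphic currents, to $\supp T^{k,m} \subset Z^g_m \cap Z^h_k$.

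Granted this refinement, hypothesis (\ref{dimvillkor}) yields $\codim \supp T^{k,m} \ge k+m > k+m-1$, so Proposition~\ref{hyp} forces $T^{k,m}=0$. The principal obstacle is the inductive support refinement, since $U^h_k$ is only a priori smooth off $Z^h$ rather than off the smaller set $Z^h_k$; the improvement must be extracted from $\nabla_\End T = 0$ together with the SEP and the vanishing of the lower components. A parallel route — which I would run in tandem as a check — is a joint two-parameter analytic continuation: for $\Re\lambda,\Re\mu$ sufficiently large, $|H|^{2\lambda} u^h \wedge R^{g,\mu} = R^{g,\mu}\wedge|H|^{2\lambda}u^h$ tautologically, and (\ref{dimvillkor}) licenses the joint continuation to a neighborhood of $(0,0)$, where the two orders of iterated limits reproduce precisely the two sides of (\ref{malla2}).
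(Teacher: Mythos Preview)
Your argument is circular. You invoke \eqref{stake} from Theorem~\ref{prod} to conclude $\nabla_\End T=0$, but look at how Theorem~\ref{prod} is actually proved: the very first step is to apply $\nabla_\End$ to \eqref{malla2}, i.e.\ to \emph{this} lemma, in order to obtain \eqref{stake}. So \eqref{stake} is not available here; the phrase ``the hypothesis in Theorem~\ref{prod}'' refers only to the codimension condition \eqref{dimvillkor}, not to any of the conclusions. Without \eqref{stake} you only get $\nabla_\End T=-(R^h\w R^g-R^g\w R^h)$, and your inductive support refinement via $\nabla_\End T=0$ never starts.

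There is a second, independent error: you assert $\supp R^g_m\subset Z^g_m$. This is false. All components $R^g_m$ are supported on $Z^g=Z^g_1$, but outside $Z^g_{m}$ one has $R^g_{m}=\alpha^g_{m}R^g_{m-1}$ with $\alpha^g_{m}$ smooth, which is generically nonzero on $Z^g$; the support of $R^g_{m}$ is all of $Z^g$, not $Z^g_{m}$. So even your crude initial support estimate $\supp T^{k,m}\subset Z^g_m\cap Z^h$ is unjustified; you only get $Z^g_1\cap Z^h_1$ to begin with.

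The paper's proof avoids both problems by refining the support \emph{directly} from the explicit factorizations $(U^h)^0_\ell=\dbar\sigma^h_\ell\,(U^h)^0_{\ell-1}$ outside $Z^h_\ell$ and $(R^g)^0_k=\dbar\sigma^g_k\,(R^g)^0_{k-1}$ outside $Z^g_k$: if the $(\ell-1,k)$-difference vanishes, then outside $Z^h_\ell$ the $(\ell,k)$-difference is a smooth multiple of it and hence vanishes there; similarly in the other index. This inductive bootstrap on $\ell+k$ gives $\supp T^{\ell,k}\subset Z^h_\ell\cap Z^g_k$, and then Proposition~\ref{hyp} with \eqref{dimvillkor} finishes. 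No appeal to $\nabla_\End T$ or to \eqref{stake} is needed. Your ``parallel route'' via a joint two-parameter continuation is not a proof either: the assertion that \eqref{dimvillkor} ``licenses'' the joint continuation and the interchange of iterated limits is exactly the content of the lemma, not something you can take for granted.
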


\begin{proof}
We have to prove that
\begin{equation}\label{buk}
(U^h)^r_\ell(R^g)^s_k-(R^g)^s_k(U^h)^r_\ell
\end{equation}
vanishes for $\ell>r\ge 0,\ k\ge s\ge 0$.  
Since $U^h$ is smooth outside $Z^h=Z_1^h$,
\eqref{buk} vanishes there. On the other hand, both terms have
support on $Z^h=Z^h_1$. Thus  \eqref{buk} has support
on $Z_1^h\cap Z_1^g$. Let us first consider the
case when $r=s=0$.
If $k=0$, then \eqref{buk} is
$$
0-I^g_{E_0^g}{\bf 1}_{Z^g}(U^h)^0_\ell,
$$
which vanishes since $Z^g$ has positive codimension, cf.,
Section~\ref{rester} above.
%%%
Next assume that $\ell=k=1$. Then  \eqref{buk} has bidegree  $(0,1)$
and support on $Z^h_1\cap Z_1^g$, which by the hypothesis
has codimension at least $2$. Thus  \eqref{buk} must vanish
in view of Proposition~\ref{hyp}. We now proceed by induction. Assume that
we have proved that \eqref{buk} vanishes whenever  $\ell+k<m$, and assume that
$\ell+k=m$. 
If $\ell\ge 2$ we know from the induction hypothesis  that
\begin{equation}\label{buk1}
(U^h)^0_{\ell-1}(R^g)^0_k-(R^g)^0_k(U^h)^0_{\ell-1}=0.
\end{equation}
Outside $Z^h_{\ell}$ we can apply the smooth form  $\alpha^h_{\ell}=\dbar\sigma^h_{\ell}$
to \eqref{buk1}, cf., the proof of Lemma~\ref{anita} above,
 and conclude that
\begin{equation}\label{buk2}
(U^h)^0_{\ell}(R^g)^0_k-(R^g)^0_k(U^h)^0_{\ell}
\end{equation} 
vanishes there, i.e., its support is contained in $Z^h_{\ell}$.
If $k\ge 2$ we find in a similar
way that \eqref{buk2} must have support on $Z^g_k$. In any case,
we find that \eqref{buk} has bidegree $(0,m-1)$ and
has support on  $Z^h_\ell\cap Z^g_k$, which  has codimension
at least  $\ell+k=m$, so \eqref{buk}  must vanish.
The case when $r+s>0$ is handled  in a similar way.
\end{proof}

\begin{proof}[Proof of Theorem~\ref{prod}]
Applying $\nabla_{\End}$ to \eqref{malla2} we get by 
\eqref{ratta} and \eqref{ratta2} that
$$
(I^h-R^h)\w  R^g=R^g\w(I^h-R^h)
$$
which is precisely \eqref{stake}.
Since $(R^g)^s=0$ for $s\ge 1$ we have that 
$$
R=\sum_{s,r\ge 0}(R^h)^r\w (R^g)^s
=\sum_{r\ge 0}(R^h)^r\w (R^g)^0.
$$
In view \eqref{stake} we thus have that
$R=(R^h)^0\w (R^g)^0=R^0$
i.e., $R^m=0$ for $m\ge 1$.
From Proposition~\ref{22} we now conclude that 
$\Ok(E),f$ is a resolution and %%of $\Ok/(\I,\J)$ and that
$
\ann R=\I+\J.
$

\smallskip
Finally, assume that $\I$ and $\J$ are Cohen-Macaulay sheaves and 
the resolutions $\Ok(E^g),g$ and $\Ok(E^h),h$ have minimal lengths
$\codim \I$ and $\codim\J$, respectively. Then the product
resolution $\Ok(E),f$ has (minimal) length $p=\codim \I+\codim\J$.
Let $U^f, R^f$ denote the currents associated with this complex.
Then $R^f$ as well as $R^h\w R^g$ are $\dbar$-closed pseudomeromorphic
currents of bidegree $(0,p)$ with support on $Z=Z^g\cap Z^h$ which
has codimension $p$, and hence they are Coleff-Herrera currents, according
to Proposition~\ref{hyp}. Moreover, cf., \eqref{malla}, 
$$
\nabla_{\End}(U-U^f)=R^f-R=R^f-R^h\w R^g.
$$
It follows from Lemma~3.1 in \cite{A11} 
that $R^f-R^h\w R^g=0$.
\end{proof}

\begin{remark}
If  $\Ok(E^g), g$ and $\Ok(E^h), h$ are resolutions
one can verify (without residue calculus)  that 
the product complex is a resolution as well if and only if
\eqref{dimvillkor} holds. Since this should be well-known we
just sketch an argument:
It is not too hard to see that (for each fixed point $x$)
\begin{equation}\label{agnes}
H^m(E^h\otimes E^g)=\otimes_{\ell+k=m}H^\ell(E^h)\otimes H^k(E^g).
\end{equation}
In fact,  choose  Hermitian metrics on $E^g$ and $E^h$.
If  $h^*$ and $h^*$  and $f^*=g^*+h^*$ are the induced adjoint mappings and
$\Delta^f=ff^*+f^*f$, etc, then
$\Delta^f=\Delta^g+\Delta^h.$
As usual each  class in $H^m(E^h\otimes E^g)$ has a unique harmonic
representative 
$$
v=\sum_{\ell+k=m}\xi_\ell\w \eta_k.
$$
However, it is easily verified that
$\Delta^f v=0$ if and only if $\Delta^g\xi_\ell=0=\Delta^h\eta_k$
for all $\ell,k$.  Thus \eqref{agnes} follows.

Let $Z_k^\I$ and $Z^\J_\ell$ be the 
varieties associated to the sheaves $\I$ and $\J$.
Since  $\Ok(E^g), g$ is exact, it follows that 
$H^k(E^g)=0$ at a given point $x$ if and only if $x\notin Z^\I_k$
and similarly for $E^h$. In view of \eqref{agnes}, therefore
$H^m(E)\neq 0$ at $x$ if and only if 
$$
x\in \cup_{\ell+k=m} Z^\I_k\cap Z^\J_{\ell}.
$$
Thus  $\codim Z_m\ge m$ for all $m$ if and only if \eqref{dimvillkor}
holds, and according to  the Buchsbaum-Eisenbud theorem
therefore $\Ok(E),f$ is a resolution if and only if \eqref{dimvillkor} holds.
\end{remark}

\section{Proofs of the main results}

We begin with

\begin{proof}[Proof of Theorem~\ref{thmett}]
If $\phi$ is strongly holomorphic, then it is represented by 
a function $\Phi$ that is holomorphic in a \nbh of $Z$.
Thus $\nabla (\phi R)=\nabla (\Phi R)=\Phi \nabla R=0$.

Now assume that $\nabla (\phi R)=0$ and 
$\phi$ is represented by $g/h$. Then by \eqref{hepp}, 
we have that
$$
0=\nabla (g(1/h)R)= -g\dbar\frac{1}{h}\w R.
$$
This means that $g$ annihilates the current $\dbar(1/h)\w R$, and by
Corollary~\ref{korre}  therefore
$g=\alpha h +\psi$,
where $\psi\in\I$. It follows that $\phi$ is represented by
$\alpha$ and thus $\phi\in\Ok_Z$.
\end{proof}

\begin{proof}[Proof of Theorem~\ref{thmtva}]
Assume that $\phi$ is meromorphic and \eqref{kruka} is fulfilled.
Clearly, $\dbar\phi\w R$ has support on $P_\phi\cap Z$, so 
$\dbar\phi\w R_p$ must vanish for degree reasons. If now
$\dbar\phi\w R_k=0$, then it follows that
$\dbar\phi\w R_{k+1}$ has support in $P_\phi\cap Z_{k+1}$, and so
it must vanish for degree reasons.
\end{proof}

\begin{proof}[Proof of Corollary~\ref{ms}]
First assume that $\phi$ is (strongly) smooth and holomorphic on $Z_{reg}$.
It is well-known that each weakly holomorphic function on $Z$
(i.e., $\phi$ holomorphic on $Z_{reg}$ and locally bounded at $Z_{sing}$)
is meromorphic, see, e.g., \cite{Dem}. 
Therefore, we have 
a~priori two definitions of $\phi R$;  either as multiplication of
 smooth function times $R$ or as multiplication by the meromorphic function $\phi$.
However, they coincide on $Z_{reg}$ and by the SEP therefore they
coincide even across $Z_{sing}$.
Therefore also the two possible definitions of 
$\nabla(\phi R)=-\dbar\phi\w R$ coincide. Since $\phi$ is holomorphic on
$Z_{reg}$ it follows that $\dbar\phi\w R$ has support on $Z_{sing}$. On the other
hand, 
$$
(\dbar\phi\w R){\bf 1}_{Z_{sing}}=\dbar\phi\w R {\bf 1}_{Z_{sing}}=0
$$
by Lemma~\ref{anita}, and hence $\nabla(\phi R)=-\dbar\phi\w R=0$. Now
the corollary follows from Theorem~\ref{thmett} with $m=\infty$.
A careful inspection of all arguments reveals that only
a finite number of derivatives (not depending on $\phi$)
come  into play but we omit the details.
\end{proof}

%%\begin{thm}\label{oxy}
%%If $\J$ has pure codimension $p>0$ and $\phi\in\Ok$,
%%then  $\phi\in \J$ if and only if $R_p\phi=0$.
%%\end{thm}

\begin{proof}[Proof of Theorem~\ref{chthm}]
%%%
The hypothesis  means that 
$0=\dbar (\phi \mu)$ for all $\mu\in \Homs(\Ok/\I,\CH_Z)$.
It is proved in \cite{A11} (Theorem~1.5) that each current $\mu$  in
$\Homs(\Ok/\I,\CH_Z)$ can be written
$\mu=\xi R_p$ for some $\xi\in\Ok(E^*)$ such that
$f^*_{p+1}\xi=0$ and conversely for 
each such $\xi$ the current $\mu=\xi R_p$
is in $\Homs(\Ok/\I,\CH_Z)$. 
Here $f^*_k$ are the induced mapping(s) on the dual complex $\Ok(E^*_k)$.
Thus 
$$
0=\dbar\phi \w \xi R_p
$$
for each such $\xi$. At a given stalk  outside  $Z_{p+1}$, 
the ideal  $\I_x$ is Cohen-Macaulay, so if we choose a minimal resolution
$\Ok(\tilde E),\tilde f$
there it will have length $p$.  If $\tilde R_p$ denotes the resulting
(germ of a) residue current, then the hypothesis  implies that
$$
0=\dbar\phi\w  \tilde R_p
$$
since then trivially $\tilde f^*_{p+1}\xi=0$ for each $\xi\in\Ok(\tilde E_p^*)$. 
However,  $R_p=\alpha\tilde R_p$, where $\alpha$
is smooth (Theorem~4.4 in \cite{AW1}).
It follows that $\dbar\phi\w  R_p$
vanishes outside $Z_{p+1}$. Since $R_{p+1}=\alpha_{p+1}R_p$ outside $Z_{p+1}$
it follows that also $\dbar\phi\w R_{p+1}$
has support on $Z_{p+1}$.
However, it is clear that $\dbar\phi\w R$ must have support on $P_\phi$.
Using the hypothesis
$\codim(P_\phi\cap Z_k)\ge k+2$ for $k>p$, it follows by induction that
$\dbar\phi\w R=0$.
Thus $\phi$ is strongly holomorphic 
according to Theorem~\ref{thmett}.
\end{proof}

\def\listing#1#2#3{{\sc #1}:\ {\it #2},\ #3.}

\end{document}